\documentclass[12pt,reqno]{amsart}

\usepackage[T1]{fontenc}
\usepackage{microtype,textcomp,fbb}
\usepackage[a4paper,margin=1in]{geometry}

\usepackage{
    amsmath, amssymb, amsthm, amscd,
    gensymb, graphicx, etoolbox,
    booktabs, stackrel, mathtools
}
\usepackage[colorlinks=true, linkcolor=blue, citecolor=blue, urlcolor=blue, breaklinks=true]{hyperref}
\usepackage[capitalise]{cleveref}
\usepackage{placeins}
\newtheorem{theorem}{Theorem}[section]

\newtheorem{lemma}[theorem]{Lemma}

\theoremstyle{definition}
\newtheorem{definition}[theorem]{Definition}
\newtheorem{example}[theorem]{Example}
\newtheorem{question}[theorem]{Question}

\usepackage{tikz,float}
\usepackage{caption}
\usepackage{subcaption}
\usepackage{comment}
\usetikzlibrary{positioning, shapes, fit, arrows}
\usepackage{tkz-graph}
\tikzstyle{vertex}=[circle, draw, inner sep=0pt, minimum size=6pt]

\def\Ind{\mathrm{Ind}}
\def\del{\mathrm{del}}
\def\lk{\mathrm{lk}}
\def\Ch{\mathrm{Ch}}

\begin{document}

\title{On the Vertex Decomposability of $r$-Independence Complexes of Trees}

\author{Rutuja Sawant}
\address{Chennai Mathematical Institute, Siruseri, Tamil Nadu, India}
\email{rutuja@cmi.ac.in}

\keywords{higher independence complex, trees, shellable complexes, vertex decomposable complexes, sequentially Cohen-Macaulay}
\subjclass[2020]{05E45, 05C76, 55U10, 05C05.}

\vspace*{-0.4cm}
\begin{abstract}
Let $G$ be a graph and $r \ge 1$. A vertex subset is $r$-independent if every connected component of its induced subgraph has size at most $r$. The family of all such subsets forms a simplicial complex, the $r$-independence complex $\Ind_r(G)$, generalizing the classical independence complex.

Recent work has focused on shellability and vertex decomposability of these complexes. For chordal graphs, $\Ind_r(G)$ has the homotopy type of a wedge of spheres for all $r$, and some chordal subfamilies are known where these complexes are not even sequentially Cohen–Macaulay. Thus, determining chordal graph classes and values of $r$ for which $\Ind_r(G)$ is sequentially Cohen–Macaulay, shellable, or vertex decomposable remains an active area. Existing methods, based on chordal hypergraphs or special graph properties, do not extend to arbitrary chordal graphs.

In this paper, we show that for every tree $T$ and every integer $r \ge 1$, the complex $\Ind_r(T)$ is vertex decomposable, resolving a conjecture \cite[Conjecture 3.15]{PD23chordal} of Abdelmalek et al. Our approach gives a structural description of shedding vertices via rooted subtrees and uses it to prove vertex decomposability recursively.
\end{abstract}

\maketitle

\section{Introduction}

Independence complexes have long formed an important interface between graph theory, combinatorial topology, and commutative algebra, with deep connections to Stanley-Reisner theory and related algebraic invariants \cite{EB07,AD09,AE11,FC05,HH04,SM16,RV90,MW97,RW09}. A natural generalization, known as the $r$-independence complex, was introduced by Paolini and Salvetti \cite{PG17}. For an integer $r\ge1$, a subset $A\subseteq V(G)$ is called \emph{$r$-independent} if every connected component of the induced subgraph $G[A]$ has size at most $r$. The collection of all such subsets forms a simplicial complex, called the \emph{$r$-independence complex} of $G$, denoted by $\Ind_r(G)$. When $r=1$, this construction recovers the classical independence complex. Higher independence complexes have since attracted considerable interest because of their connections with domination parameters, homological connectivity, and hypergraph methods \cite{PD22,PD21,AG25}.

A central theme in the study of independence-type complexes is to understand when they satisfy strong combinatorial properties such as shellability and vertex decomposability. These notions are related through the implications
\[
\text{Vertex decomposable}
\Rightarrow
\text{Shellable}
\Rightarrow
\text{Sequentially Cohen-Macaulay},
\]
see \cite{JJonsson08}. In particular, shellable complexes have the homotopy type of a wedge of spheres, although the converse need not hold. The notion of vertex decomposability was introduced by Provan and Billera \cite{PB80} and later extended to non-pure complexes by Bj\"orner and Wachs \cite{BW96,BW97}. In the classical case $r=1$, Dochtermann and Engstr\"om \cite{AD09} and Woodroofe \cite{RW09} proved that if $G$ is chordal, then $\Ind(G)$ is vertex decomposable. These results naturally motivate the investigation of analogous structural properties for the higher independence complexes $\Ind_r(G)$.

Motivated by this, Abdelmalek et al.~\cite{PD23chordal} studied shellability and vertex decomposability of higher independence complexes through methods from the theory of chordal hypergraphs. Using a hypergraph associated to a graph $G$, they claimed that $\Ind_r(T)$ is shellable for every tree $T$, and proved that $\Ind_r(G)$ is vertex decomposable for caterpillar graphs. Their approach relied on Woodroofe's result that the independence complex of a chordal hypergraph is shellable~\cite{RW09clutters}. They also constructed chordal graphs whose higher independence complexes are not sequentially Cohen-Macaulay, showing that the behavior of $\Ind_r(G)$ for $r \ge 2$ differs substantially from that of ordinary independence complexes. However, as later observed by Ghosh and Selvaraja~\cite[Remark 4.2]{AG25}, the required chordality assertion for the associated hypergraph fails for arbitrary trees. Consequently, the proof in Abdelmalek et al.~does not establish shellability of $\Ind_r(T)$ for all trees.

These developments left open the question of whether higher independence complexes of trees possess stronger combinatorial properties. This was posed as a conjecture in \cite{PD23chordal}. Unlike the classical independence complex, higher independence complexes allow connected induced subgraphs of bounded size, making the recursive structure of links and deletions substantially more complicated. Moreover, the previously proposed hypergraph approach does not apply to arbitrary trees. The main result of this paper resolves the conjecture completely.

\begin{theorem}[Theorem~\ref{them: V.D for tree}]\label{them: main}
Let $T$ be a tree and let $r \ge 1$. Then the $r$-independence complex $\Ind_r(T)$ is vertex decomposable.
\end{theorem}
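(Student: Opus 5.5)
The plan is an induction on $|V(T)|$ using the recursive definition of vertex decomposability (Björner–Wachs). A complex $\Delta$ is vertex decomposable if it is a simplex or empty, or if it has a shedding vertex $v$ with three properties: $\lk_\Delta(v)$ and $\del_\Delta(v)$ are vertex decomposable, and no facet of $\lk_\Delta(v)$ is a facet of $\del_\Delta(v)$.

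The difficulty is that links in $\Ind_r(T)$ are not again $r$-independence complexes of trees. For a vertex $v$, $\lk(v)$ consists of the sets $A$ with $A\cup\{v\}$ $r$-independent, which mixes a bound on the component containing $v$ with bounds on the other components. I would therefore prove a stronger statement for a family closed under links and deletions. A natural family is the complexes $\Ind_r(F;S,w)$, where:
\begin{itemize}
\item $F$ is a forest;
\item $S$ is a set of vertices already forced into the face (equivalently, components with prescribed ``weights'');
\item the allowed sets $A$ are those for which every component of $F[A\cup S]$ has size at most $r$.
\end{itemize}
Deleting a vertex $v\notin S$ gives $F-v$ with the same $S$. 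Taking the link of $v$ adds $v$ to $S$. So the family is closed under both operations, and it contains $\Ind_r(T)$ as the case $S=\emptyset$. Vertices of $S$ that are cut off from everything can be dropped, and a disjoint union of forests gives a join of complexes, which preserves vertex decomposability.

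The key step is to choose the shedding vertex structurally, via rooted subtrees. Root a tree component and pick a vertex $u$ of maximal depth among non-leaf non-$S$ vertices, so that the subtree below $u$ is a star of leaves (possibly with $S$-vertices). More generally, take a deepest vertex $v$ whose rooted subtree $T_v$ has more than $r$ vertices available once $S$ is counted, while all its child subtrees are ``small''. I would try first to show that a leaf $\ell$ hanging from such a deep vertex, or the vertex $v$ itself, is shedding. The shedding condition says that for every facet $\sigma$ of $\lk(x)$ there is a vertex $y\ne x$ with $\sigma\cup\{y\}$ $r$-independent relative to $S$. To verify it for $x=v$, take a facet $\sigma$ of the link containing $v$'s component. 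By maximality, removing $v$ frees room. Because the child subtrees are small and $v$ has too many descendants to all be included, some descendant $y$ of $v$ is missing from $\sigma$. Adding $y$ after removing $v$ splits $v$'s component into pieces inside the small child subtrees, and these stay of size $\le r$.

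This exchange argument is the main obstacle. One must pin down exactly which vertex works in each case, including when $r=1$, when $S$ vertices sit below $v$, and when $v$'s component in $\sigma$ extends upward through the parent. I expect to need a counting lemma: if $|V(T_v)\setminus S|$, weighted with the $S$-components, exceeds $r$, then any maximal face containing $v$ omits some descendant $y$ whose addition (without $v$) is valid.

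Once a shedding vertex is found, the link and the deletion lie in the same family on fewer free vertices, so induction applies. The base cases are forests where every allowed set is allowed, which give a simplex, and the empty complex. Specializing to $S=\emptyset$ and a single tree yields Theorem~\ref{them: main}.
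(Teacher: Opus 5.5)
Your enlargement to the family of complexes $\lk_{\Ind_r(F)}(S)$ is legitimate. It is closed under links and deletions, and an induction inside it would even bypass the paper's nested induction on $|V(T)|-|C|$ and its appeal to the fact that links in vertex decomposable complexes are vertex decomposable. But the whole theorem then reduces to showing that every member of the family has a shedding vertex. That is exactly the step you leave open, and the candidates you name do not work.

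Your exchange argument for $x=v$ is correct when $v\notin S$ and $S\cup\{v\}$ is $r$-independent. Since every child subtree has at most $r$ vertices, each facet of $\del_\Delta(v)$ contains $V(T_v)\setminus(S\cup\{v\})$, so adjoining $v$ creates the connected set $V(T_v)$ with more than $r$ vertices.

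The trouble is the case $v\in S$. It cannot be avoided and already occurs at the first link. Take $r=2$ and the spider $T$ with centre $c$ and legs $c$--$a_1$--$a_2$, $c$--$b_1$--$b_2$, $c$--$d_1$--$d_2$.
\begin{itemize}
\item For every choice of root, the only vertex $v$ with $|V(T_v)|\ge r+1$ and all child subtrees of size at most $r$ is $c$. Any such vertex meets the criterion of Lemma~\ref{lem: all shedding vertices}, and only $c$ does, so $c$ is also the only shedding vertex of $\Ind_2(T)$.
\item The next complex is therefore $\Delta=\lk_{\Ind_2(T)}(c)$, with $S=\{c\}$. Here $c$ is not a vertex of $\Delta$, and $c$ has no leaf neighbours.
\item The leaves are not shedding: $\{b_1,d_2\}$ is a facet of $\del_\Delta(a_2)$, yet $\{a_2,b_1,d_2\}\in\Delta$.
\item The shedding vertices of $\Delta$ are exactly $a_1,b_1,d_1$.
\end{itemize}
Your first heuristic, the deepest non-leaf non-$S$ vertex, fails even for $S=\emptyset$. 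For the path $P=p_1p_2p_3p_4p_5$ rooted at $p_1$ with $r=2$, the facet $\{p_1,p_2,p_5\}$ of $\del_{\Ind_2(P)}(p_4)$ stays $2$-independent after adding $p_4$.

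The missing idea is the one in the paper's Lemma~\ref{lem: existence of shedding vertex in link}. When $v\in S$, shed a vertex $w\in V(T_v)\setminus S$ whose rooted subtree $T_w$ is largest. Maximality forces every vertex on the path from $v$ to the parent of $w$ to lie in $S$. Every facet $\sigma$ of $\del_\Delta(w)$ contains $V(T_w)\setminus(S\cup\{w\})$.
\begin{itemize}
\item If $\sigma\cup S\cup\{w\}$ contains all of $V(T_v)$, you are done, since $T_v$ is connected with at least $r+1$ vertices.
\item Otherwise pick $x\in V(T_v)\setminus(S\cup\sigma\cup\{w\})$. The component $C'$ of $T[\sigma\cup S\cup\{x\}]$ with more than $r$ vertices must contain $v$, and hence the parent of $w$. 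Then $(C'\setminus V(T_x))\cup V(T_w)$ is a connected subset of $\sigma\cup S\cup\{w\}$ with at least $|C'|\ge r+1$ vertices, because $|V(T_w)|\ge|V(T_x)|$.
\end{itemize}
This subtree-replacement step is the heart of the paper's proof and is what you call ``the main obstacle''. The counting lemma you anticipate cannot replace it: once $v\in S$, removing $v$ to make room is no longer an option.

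With this step, your family-based induction would go through. You would also need to first discard vertices $u\notin S$ for which $S\cup\{u\}$ is not $r$-independent, so that the chosen vertex is genuinely a vertex of the complex. As written, however, the proposal does not prove the theorem.
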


Our proof gives a structural characterization of shedding vertices in $\Ind_r(T)$ in terms of rooted subtrees of $T$. We further show that links with respect to connected induced subgraphs inherit compatible shedding vertices, allowing vertex decomposability to be established recursively without relying on hypergraph chordality techniques.

Theorem~\ref{them: main} also has consequences for other graph complexes closely related to higher independence complexes. In particular, path-free complexes form an important class of simplicial complexes associated with graphs and have been studied extensively from combinatorial, algebraic, and topological viewpoints \cite{AKK24,Bi24,AH09,SM11}. Recall that the $t$-path free complex of a graph $G$ is the simplicial complex whose faces are subsets of vertices inducing no path on $t$ vertices. Observe that a subset $A\subseteq V(G)$ induces no path on three vertices if and only if every connected component of $G[A]$ has size at most $2$. Consequently, the $3$-path free complex of $G$ coincides with the $2$-independence complex $\Ind_2(G)$. As an immediate consequence of Theorem~\ref{them: main}, it follows that the $3$-path free complex of a tree is vertex decomposable.

The paper is organized as follows. Section~\ref{sec: 2} recalls background on graphs, simplicial complexes, vertex decomposability, and higher independence complexes. Section~\ref{sec: 3} contains the proof of the main theorem.

\section{Preliminaries}\label{sec: 2}

In this section, we collect the definitions and preliminary results needed for the proof of the main theorem. We begin with basic notions from graph theory and rooted trees, then review simplicial complexes and vertex decomposability, and finally discuss properties of $r$-independence complexes that will be used throughout.

\subsection{Graph theory.}

A (finite, simple) \emph{graph} is a pair $G=(V(G),E(G))$, where $V(G)$ is a finite set of vertices and
\[
E(G)\subseteq \{\{u,v\}:u,v\in V(G),\,u\neq v\}
\]
is the edge set. If $\{u,v\} \in E(G)$, then $u$ and $v$ are said to be \emph{adjacent}, and the edge $\{u,v\}$ is said to be \emph{incident} with both $u$ and $v$.

A graph \(H\) is a \emph{subgraph} of \(G\) if \(V(H)\subseteq V(G)\) and \(E(H)\subseteq E(G)\). For a subset \(A\subseteq V(G)\), the \emph{induced subgraph} \(G[A]\) is the graph with vertex set \(A\) and edge set
\[
E(G[A])
=
\{\{u,v\}\in E(G):u,v\in A\}.
\]

For \(A\subseteq V(G)\), the \emph{deletion} \(G\setminus A\) is the induced subgraph
\[
G[V(G)\setminus A],
\]
obtained by deleting all vertices of \(A\) together with all incident edges.

If \(G_1\) and \(G_2\) have disjoint vertex sets, then their \emph{disjoint union} \(G_1\sqcup G_2\) is the graph with vertex set
\[
V(G_1)\sqcup V(G_2)
\]
and edge set
\[
E(G_1)\sqcup E(G_2).
\]

For \(A\subseteq V(G)\), the \emph{neighbourhood} of \(A\) is
\[
N(A)
=
\{v\in V(G)\setminus A : uv\in E(G)\text{ for some }u\in A\},
\]
and the \emph{closed neighbourhood} of \(A\) is
\[
N[A]=A\cup N(A).
\]

A \emph{path} in \(G\) is a sequence of distinct vertices
\[
v_0,v_1,\dots,v_k
\]
such that \(\{v_i,v_{i+1}\}\in E(G)\) for all \(0\le i\le k-1\). The vertices \(v_0\) and \(v_k\) are called the endpoints of the path. We write \(u-v\) if there exists a path joining \(u\) and \(v\).

A graph is \emph{connected} if every pair of vertices is joined by a path. A \emph{connected component} of \(G\) is a maximal connected subgraph of \(G\).

\subsection{Rooted trees.}

Rooted trees will play an important role in describing the structure of trees locally and in identifying shedding vertices in $r$-independence complexes.

A \emph{tree} $T=(V(T),E(T))$ is a connected graph in which there exists a unique path between every pair of distinct vertices. A \emph{forest} is a graph whose connected components are trees.

A \emph{rooted tree} is a tree $T$ together with a distinguished vertex $v \in V(T)$ called the \emph{root}. Once a root is fixed, the vertices of $T$ inherit a natural hierarchical structure. For each vertex 
\[
u \in V(T)\setminus\{v\},
\]
the \emph{parent} of $u$ is the unique neighbour of $u$ lying on the unique path from $u$ to the root $v$. In this case, $u$ is called a \emph{child} of its parent. The set of children of a vertex $u$ is denoted by $\Ch(u)$. 

A vertex $w$ is called a \emph{descendant} of a vertex $u$ if $u$ lies on the unique path from the root $v$ to $w$. In this case, $u$ is called an \emph{ancestor} of $w$.

We now define the rooted subtree associated with a vertex of a rooted tree.

\begin{definition}\label{def: rooted subtree}
Let $T$ be a tree rooted at $v$. For a vertex $u \in V(T)$, the \emph{rooted subtree} $T_u$ is the subtree of $T$ induced by $u$ together with all descendants of $u$, viewed as a rooted tree with root $u$.
\end{definition}

\begin{figure}[H]
\centering
\begin{tikzpicture}[
every node/.style={circle,fill=black,inner sep=2pt},
level 1/.style={sibling distance=2cm},
level 2/.style={sibling distance=1cm},
level 3/.style={sibling distance=1cm},
level distance=1cm
]

\node[label=above:$v_1$] {}
child { node[label=left:$v_2$] {}
    child { node[label=below:$v_4$] {} }
    child { node[label=below:$v_5$] {} }
}
child { node[label=right:$v_3$] {}
    child { node[label=below:$v_6$] {} }
};

\end{tikzpicture}
\caption{A rooted tree with root $v_1$.}
\label{fig: tree1}
\end{figure}

Figure~\ref{fig: tree1} illustrates a rooted tree with root $v_1$. The parent of $v_2$ and $v_3$ is $v_1$, the parent of $v_4$ and $v_5$ is $v_2$, and the parent of $v_6$ is $v_3$. Hence,
\[
\mathrm{Ch}(v_1)=\{v_2,v_3\}, \quad 
\mathrm{Ch}(v_2)=\{v_4,v_5\}, \quad 
\mathrm{Ch}(v_3)=\{v_6\}.
\]

The rooted subtree $T_{v_2}$ is the induced subgraph $T[\{v_2,v_4,v_5\}]$, the rooted subtree $T_{v_3}$ is the induced subgraph $T[\{v_3,v_6\}]$ and the rooted subtree $T_{v_4}$ is the vertex itself. Similarly, the rooted subtrees for the other vertices are defined analogously. 

\subsection{Simplicial complexes.}

An \emph{(abstract) simplicial complex} $\Delta$ on a finite vertex set $V$ is a collection of subsets of $V$ such that if $F \in \Delta$, then every subset of $F$ also belongs to $\Delta$. The elements of $\Delta$ are called \emph{faces}, and the maximal faces with respect to inclusion are called \emph{facets}. We denote the set of facets of $\Delta$ by $\mathcal{F}(\Delta)$. 

The \emph{dimension} of a face $F$ is $\dim F = |F|-1$, where $|F|$ is the cardinality of $F$. The \emph{dimension} of $\Delta$ is the maximum dimension of its faces. An {\it $n$-simplex} is an $n$-dimensional simplicial complex with exactly one facet of dimension $n$. If $\Delta$ is a simplex with vertex set $V$, we write it as $\Delta_V$.

The \emph{empty complex}, denoted by $\{\emptyset\}$, consists only of the empty set and is regarded as the unique $(-1)$-dimensional simplicial complex, while the \emph{void complex}, denoted by $\emptyset$, has no faces. 

A \emph{subcomplex} $\Delta'$ of $\Delta$ is a simplicial complex such that every face of $\Delta'$ is also a face of $\Delta$. For $W \subseteq V$, the \emph{induced subcomplex} of $\Delta$ on $W$ is
\[
\Delta[W]=\{F \in \Delta : F \subseteq W\}.
\]

\medskip

We now recall several standard constructions on simplicial complexes that will be used in the sequel.

\begin{definition}\label{def: join of a SC}
Let $\Delta_1$ and $\Delta_2$ be simplicial complexes on disjoint vertex sets. Their \emph{join}, denoted by $\Delta_1 * \Delta_2$, is the simplicial complex with vertex set $V(\Delta_1)\cup V(\Delta_2)$ and face set
\[
\{F_1 \cup F_2 : F_1 \in \Delta_1,\ F_2 \in \Delta_2\}.
\]
In particular,
\[
\mathcal{F}(\Delta_1 * \Delta_2)
=
\{F_1 \cup F_2 : F_1 \in \mathcal{F}(\Delta_1),\ F_2 \in \mathcal{F}(\Delta_2)\}.
\]
\end{definition}

We next introduce the notions of link and deletion, which are central to recursive definitions such as vertex decomposability.

\begin{definition}\label{def: lk and del}
Let $F$ be a face of a simplicial complex $\Delta$. The \emph{link} of $F$ is the subcomplex of $\Delta$ defined by
\[
\lk_{\Delta}(F)=\{F' \in \Delta \; : \; F \cap F' = \emptyset \; \text{ and}\; F \cup F' \in \Delta\}.
\]

The \emph{deletion} of $F$ is the subcomplex of $\Delta$ defined by
\[
\del_{\Delta}(F)=\{F' \in \Delta \; : \; F \cap F' = \emptyset\}.
\]
\end{definition}

In case $F=\{v\}$, for brevity, we denote $\mathrm{lk}_{\Delta}(\{v\})$ and $\mathrm{del}_{\Delta}(\{v\})$ as $\mathrm{lk}_{\Delta}(v)$ and $\mathrm{del}_{\Delta}(v)$, respectively. 

We next state the link–deletion identities used in the proof of the main theorem.

\begin{lemma}\cite[Lemma 1.1]{PB80}\label{lem: identities}
Let $\Delta$ be a simplicial complex. For faces $F_1, F_2$ of $\Delta$ with $F_1 \cap F_2 = \emptyset$, we have:
\begin{enumerate}
    \item $\del_{\lk_{\Delta}(F_1)}(F_2) = \lk_{\del_{\Delta}(F_2)}(F_1)$,
    \item $\lk_{\lk_{\Delta}(F_1)}(F_2) = \lk_{\Delta}(F_1 \cup F_2)$.
\end{enumerate}
\end{lemma}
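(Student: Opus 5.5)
Both identities are proved by comparing face sets directly from Definition~\ref{def: lk and del}. In each case we show that a set $G\subseteq V$ belongs to the left-hand side if and only if it belongs to the right-hand side. The only facts used are that $\Delta$ is closed under taking subsets and that $F_1\cap F_2=\emptyset$.

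For (1), let $G\in\del_{\lk_{\Delta}(F_1)}(F_2)$. Unwinding the definitions, this means $G\in\lk_\Delta(F_1)$ and $G\cap F_2=\emptyset$, that is,
\[
G\cap F_1=\emptyset,\qquad G\cup F_1\in\Delta,\qquad G\cap F_2=\emptyset.
\]
On the other side, $G\in\lk_{\del_\Delta(F_2)}(F_1)$ means $G\cap F_1=\emptyset$ and $G\cup F_1\in\del_\Delta(F_2)$. The latter says $G\cup F_1\in\Delta$ and $(G\cup F_1)\cap F_2=\emptyset$. Because $F_1\cap F_2=\emptyset$, the condition $(G\cup F_1)\cap F_2=\emptyset$ is equivalent to $G\cap F_2=\emptyset$. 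So both sides are described by the same three conditions. The only point needing care is the implicit requirement that $G$ and $F_1$ be faces of the ambient complex, here $\del_\Delta(F_2)$. This holds because $G\subseteq G\cup F_1$, $F_1\subseteq G\cup F_1$, and neither meets $F_2$, so both lie in $\del_\Delta(F_2)$ by closure under subsets.

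For (2), suppose first that $F_2\in\lk_\Delta(F_1)$, which holds whenever $F_1\cup F_2\in\Delta$. Then $G\in\lk_{\lk_\Delta(F_1)}(F_2)$ means $G\cap F_2=\emptyset$ and $G\cup F_2\in\lk_\Delta(F_1)$. The latter says $(G\cup F_2)\cap F_1=\emptyset$ and $G\cup F_1\cup F_2\in\Delta$. Together these say $G\cap(F_1\cup F_2)=\emptyset$ and $G\cup(F_1\cup F_2)\in\Delta$, which is exactly $G\in\lk_\Delta(F_1\cup F_2)$. The reverse inclusion is the same chain read backwards, using downward closure to see that $G\cup F_2$ is a face of $\lk_\Delta(F_1)$.

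The main subtlety, and the step I expect to be the obstacle, is the degenerate case $F_1\cup F_2\notin\Delta$. Then $F_2$ is not a face of $\lk_\Delta(F_1)$, and the link on the left must be read with the convention that the link of a non-face is the void complex. Under that convention both sides are void: on the right, no $G$ can satisfy $G\cup F_1\cup F_2\in\Delta$. So I would state this convention explicitly. Since the paper cites the result from \cite{PB80}, one may alternatively simply invoke that reference; the verification above is routine and mainly confirms that the disjointness hypothesis is exactly what is needed.
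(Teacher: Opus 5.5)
Your proof is correct. The paper gives no argument for this lemma; it simply quotes it from \cite{PB80}. Your element-wise unwinding of Definition~\ref{def: lk and del} therefore supplies the routine check that the paper delegates. You use the disjointness hypothesis exactly where it is needed: in (1) to replace $(G\cup F_1)\cap F_2=\emptyset$ by $G\cap F_2=\emptyset$, and in (2) to replace $(G\cup F_2)\cap F_1=\emptyset$ by $G\cap F_1=\emptyset$.

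The degenerate case in (2) needs no extra convention if you read the set-builder formulas for arbitrary vertex sets. Because $\Delta$ and $\lk_\Delta(F_1)$ are closed under subsets, both sides are then automatically void when $F_1\cup F_2\notin\Delta$. You already rely on this same reading, without saying so, in (1), where $F_2$ need not be a face of $\lk_\Delta(F_1)$.

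In the paper's only applications, in Lemma~\ref{lem: shedding vertex for the particular link}, one has $F_1=V(C)$ and $F_2=\{w\}$ with $w$ a vertex of $\lk_{\Ind_r(T)}(C)$. So the degenerate case never arises there.
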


\medskip

We now recall the notion of vertex decomposability, which is the main combinatorial property studied in this paper.

\begin{definition}\label{def: VD}
A simplicial complex $\Delta$ is said to be \emph{vertex decomposable} if either $\Delta$ is a simplex (including $\emptyset$ and $\{\emptyset\}$), or there exists a vertex $v$ such that both $\lk_{\Delta}(v)$ and $\del_{\Delta}(v)$ are vertex decomposable and each facet of $\del_{\Delta}(v)$ is a facet of $\Delta$. Such a vertex $v$ is called a \emph{shedding vertex}.
\end{definition}

For a shedding vertex $v$, the facets of the deletion $\del_{\Delta}(v)$ are precisely those facets of $\Delta$ that do not contain $v$. Hence
\[
\mathcal{F}(\del_{\Delta}(v))
=
\{F \in \mathcal{F}(\Delta) : v \notin F\}.
\]
Equivalently, a vertex $v$ is a shedding vertex if for every facet $F$ of $\del_{\Delta}(v)$, the set $F$ is not a face of $\lk_{\Delta}(v)$; that is, the set $F \cup \{v\}$ is not a face of $\Delta$.

\subsection{Higher independence complexes.}

Using the terminology introduced above, we now define the $r$-independence complex of a graph.

\begin{definition}\label{def: r-ind complex}
Let $G$ be a graph and let $r \ge 1$. A subset $A \subseteq V(G)$ is called \emph{$r$-independent} if every connected component of the induced subgraph $G[A]$ contains at most $r$ vertices. The collection of all $r$-independent subsets of $V(G)$ forms a simplicial complex, called the \emph{$r$-independence complex} of $G$, and is denoted by $\Ind_r(G)$.
\end{definition}

Thus, an $r$-independent set allows small connected clusters of size at most $r$, generalizing the classical notion of independence where no edges are allowed.

We now describe how $r$-independence complexes behave under basic graph operations and how their link and deletion structures can be expressed in graph-theoretic terms.

\begin{lemma}\label{lem: disjoint union}
Let $G_1$ and $G_2$ be graphs on disjoint vertex sets. Then
\[
\Ind_r(G_1 \sqcup G_2)=\Ind_r(G_1)*\Ind_r(G_2).
\]
Consequently, $\Ind_r(G_1 \sqcup G_2)$ is vertex decomposable if and only if both $\Ind_r(G_1)$ and $\Ind_r(G_2)$ are vertex decomposable.
\end{lemma}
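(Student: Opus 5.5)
The identity is a direct check from the definitions, and the consequence will follow from the known fact that a join of two complexes is vertex decomposable if and only if each factor is.

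\textbf{The identity.} Let $A \subseteq V(G_1)\sqcup V(G_2)$ and write $A_i = A\cap V(G_i)$. Since there are no edges between $V(G_1)$ and $V(G_2)$, we have
\[
(G_1\sqcup G_2)[A]=G_1[A_1]\sqcup G_2[A_2].
\]
So the connected components of $(G_1\sqcup G_2)[A]$ are exactly the components of $G_1[A_1]$ together with those of $G_2[A_2]$. Hence $A$ is $r$-independent in $G_1\sqcup G_2$ if and only if each $A_i$ is $r$-independent in $G_i$. By Definition~\ref{def: join of a SC}, this says $\Ind_r(G_1\sqcup G_2)=\Ind_r(G_1)*\Ind_r(G_2)$.

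\textbf{The consequence.} It suffices to show that for complexes $\Delta_1,\Delta_2$ on disjoint vertex sets, $\Delta_1*\Delta_2$ is vertex decomposable if and only if both $\Delta_i$ are. This is a standard result of Provan and Billera \cite{PB80}, and I would cite it. A self-contained proof goes by induction on the number of vertices, using two identities for $v\in V(\Delta_1)$:
\[
\lk_{\Delta_1*\Delta_2}(v)=\lk_{\Delta_1}(v)*\Delta_2,\qquad \del_{\Delta_1*\Delta_2}(v)=\del_{\Delta_1}(v)*\Delta_2,
\]
together with the description of facets of a join from Definition~\ref{def: join of a SC}.

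\emph{If both $\Delta_i$ are vertex decomposable.} If both are simplices, so is the join. Otherwise, say $\Delta_1$ is not a simplex and has shedding vertex $v$. By induction the link and deletion of $v$ in the join are vertex decomposable. A facet of $\del_{\Delta_1}(v)*\Delta_2$ has the form $F_1\cup F_2$ with $F_1$ a facet of $\del_{\Delta_1}(v)$ and $F_2$ a facet of $\Delta_2$. Since $F_1$ is a facet of $\Delta_1$, the set $F_1\cup F_2$ is a facet of the join. So $v$ is a shedding vertex of the join.

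\emph{If the join is vertex decomposable.} This direction is the main obstacle. If the join is a simplex, each factor is a simplex. (The void-complex case is degenerate: a join with the void complex is void. Here both graph complexes always contain $\emptyset$, so this does not arise.) Otherwise, take a shedding vertex $v$ of the join, say $v\in V(\Delta_1)$. By the two identities and induction, $\lk_{\Delta_1}(v)$, $\del_{\Delta_1}(v)$ and $\Delta_2$ are all vertex decomposable.

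The shedding condition also descends to $\Delta_1$. Let $F_1$ be a facet of $\del_{\Delta_1}(v)$ and fix any facet $F_2$ of $\Delta_2$. Then $F_1\cup F_2$ is a facet of the deletion of $v$ in the join, hence a facet of the join, hence $F_1$ is a facet of $\Delta_1$. So $v$ is a shedding vertex of $\Delta_1$, and $\Delta_1$ is vertex decomposable.

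A small edge case needs care: $v$ might be a cone point of $\Delta_1$, in which case $\del_{\Delta_1}(v)$ has no facets that are facets of $\Delta_1$. The shedding hypothesis on the join rules this out, because it forces $\del_{\Delta_1}(v)$ to have facets that are facets of $\Delta_1$.
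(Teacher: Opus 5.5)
Your proof is correct and follows the paper's argument. The identity comes from the same observation that $(G_1\sqcup G_2)[A]=G_1[A_1]\sqcup G_2[A_2]$, and the consequence from the fact that a join is vertex decomposable exactly when both factors are, which the paper simply cites as \cite[Theorem 3.30]{JJonsson08}. Your inductive sketch of that join fact is sound and worth keeping, since \cite{PB80} works with pure complexes whereas the definition used here is the nonpure one of Bj\"orner--Wachs.
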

\begin{proof}
Let $F \subseteq V(G_1)\cup V(G_2)$, and define
\[
F_1 = F \cap V(G_1),
\qquad
F_2 = F \cap V(G_2).
\]
Since $G_1$ and $G_2$ have disjoint vertex sets, the induced subgraph satisfies
\[
(G_1\sqcup G_2)[F]
=
G_1[F_1]\sqcup G_2[F_2].
\]
Hence every connected component of $(G_1\sqcup G_2)[F]$ is a connected component of either $G_1[F_1]$ or $G_2[F_2]$. Therefore, every connected component of $(G_1\sqcup G_2)[F]$ has at most $r$ vertices if and only if the same holds for both $G_1[F_1]$ and $G_2[F_2]$.

It follows that $F$ is an $r$-independent set of $G_1\sqcup G_2$ if and only if $F_1$ and $F_2$ are $r$-independent sets of $G_1$ and $G_2$, respectively. Thus the faces of $\Ind_r(G_1\sqcup G_2)$ are precisely the unions
\[
F_1\cup F_2,
\quad
\text{where }
F_1\in \Ind_r(G_1)
\text{ and }
F_2\in \Ind_r(G_2).
\]
Therefore,
\[
\Ind_r(G_1\sqcup G_2)
=
\Ind_r(G_1)*\Ind_r(G_2).
\]

The final statement follows from \cite[Theorem 3.30]{JJonsson08}, which states that the join of simplicial complexes on disjoint vertex sets is vertex decomposable if and only if both complexes are vertex decomposable.
\end{proof}

Next, we examine how vertex deletion in a graph affects the associated $r$-independence complex.

\begin{lemma}\label{lem: del}
Let $G$ be a graph and let $v \in V(G)$. Then
\[
\del_{\Ind_r(G)}(v)=\Ind_r(G\setminus v).
\]
\end{lemma}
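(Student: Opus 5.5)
The identity follows by comparing the two complexes face by face; we prove the two inclusions via a single equivalence of membership conditions. By Definition~\ref{def: lk and del}, a subset $F\subseteq V(G)$ lies in $\del_{\Ind_r(G)}(v)$ if and only if $F\in\Ind_r(G)$ and $v\notin F$. On the other side, $F\in\Ind_r(G\setminus v)$ if and only if $F\subseteq V(G)\setminus\{v\}$ and every connected component of $(G\setminus v)[F]$ has at most $r$ vertices. So it suffices to show that for $F\subseteq V(G)\setminus\{v\}$, being $r$-independent in $G$ is equivalent to being $r$-independent in $G\setminus v$.

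The key observation is that induced subgraphs commute with deletion. If $v\notin F$, then $F\subseteq V(G)\setminus\{v\}$. By the definition of $G\setminus v$ as the induced subgraph $G[V(G)\setminus\{v\}]$, its edges among vertices of $F$ are exactly the edges of $G$ with both endpoints in $F$. Hence $(G\setminus v)[F]=G[F]$ as graphs. The two graphs therefore have the same connected components, and one satisfies the size-$r$ condition exactly when the other does.

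Combining these, we get $F\in\del_{\Ind_r(G)}(v)$ if and only if $v\notin F$ and $G[F]$ has all components of size at most $r$. This holds if and only if $F\subseteq V(G\setminus v)$ and $(G\setminus v)[F]$ has all components of size at most $r$, which is exactly $F\in\Ind_r(G\setminus v)$. Both complexes are thus the same collection of subsets, which gives the claimed equality.

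There is no real obstacle. The only point needing care is the convention on vertex sets: if $v$ is not contained in any face (impossible here, since $\{v\}$ is always $r$-independent for $r\ge1$), the complexes could differ only in their nominal vertex sets. Since complexes are compared as collections of faces, this causes no issue.
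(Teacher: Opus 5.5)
Your proof is correct and matches the paper's argument. Both reduce to the observation that for $F\subseteq V(G)\setminus\{v\}$ one has $(G\setminus v)[F]=G[F]$, so $F$ is $r$-independent in $G$ exactly when it is $r$-independent in $G\setminus v$; your closing remark about nominal vertex sets is unnecessary but harmless.
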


\begin{proof}
By definition,
\[
\del_{\Ind_r(G)}(v)
=
\{F\in \Ind_r(G): v\notin F\}.
\]
Let $F \subseteq V(G)\setminus\{v\}$. Then
\[
G[F]=(G\setminus v)[F].
\]
Hence every connected component of $G[F]$ has at most $r$ vertices if and only if the same holds for $(G\setminus v)[F]$. Therefore,
\[
F\in \Ind_r(G)
\quad\Longleftrightarrow\quad
F\in \Ind_r(G\setminus v).
\]
It follows that
\[
\del_{\Ind_r(G)}(v)=\Ind_r(G\setminus v).
\]
\end{proof}

We now characterize the structure of links in $r$-independence complexes, which will play a key role in inductive arguments later.

\begin{lemma}\label{lem: link}
Let $G$ be a graph and let $A \subseteq V(G)$ such that $|A|=r$ and $G[A]$ is connected. Then
\[
\lk_{\Ind_r(G)}(A)
=
\Ind_r\bigl(G \setminus N_G[A]\bigr),
\]
where $N_G[A]$ denotes the closed neighbourhood of $A$ in $G$.
\end{lemma}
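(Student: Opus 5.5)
We prove the equality by showing that each side is contained in the other. Recall that $\lk_{\Ind_r(G)}(A)$ consists of the sets $F \in \Ind_r(G)$ with $F\cap A=\emptyset$ and $F\cup A\in \Ind_r(G)$. The key observation is that $G[A]$ is connected and has exactly $r$ vertices, so it is already a connected component of maximal allowed size. Consequently, no vertex adjacent to $A$ can be added to $A$.

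For the inclusion $\subseteq$, take $F$ in the link. We first show that $F\cap N_G(A)=\emptyset$. Suppose instead that some $w\in F$ is adjacent to a vertex of $A$. Then $G[A\cup\{w\}]$ is connected, so the component of $G[F\cup A]$ containing $A$ has at least $r+1$ vertices. This contradicts $F\cup A\in\Ind_r(G)$. Together with $F\cap A=\emptyset$, this gives $F\subseteq V(G)\setminus N_G[A]$. Since $F\in \Ind_r(G)$ and $(G\setminus N_G[A])[F]=G[F]$, the set $F$ is $r$-independent in $G\setminus N_G[A]$, exactly as in the proof of Lemma~\ref{lem: del}.

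For the inclusion $\supseteq$, take $F\in \Ind_r(G\setminus N_G[A])$. Then $F\cap A=\emptyset$, and $G[F]$ has all components of size at most $r$, again because induced subgraphs agree. Moreover, no vertex of $F$ lies in $N_G(A)$, so there are no edges between $A$ and $F$. Hence
\[
G[F\cup A]=G[F]\sqcup G[A].
\]
Its components are the components of $G[F]$, each of size at most $r$, together with $G[A]$, which has size exactly $r$. Therefore $F\cup A\in \Ind_r(G)$, and $F$ lies in the link.

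The only step needing care is the first inclusion. There we must use both hypotheses, that $|A|=r$ and that $G[A]$ is connected, to rule out neighbours of $A$. Both hypotheses are genuinely needed: if either fails, a neighbour of $A$ could be added to $A$ without creating a component of size larger than $r$, and the equality breaks down.
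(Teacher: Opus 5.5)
Your proof is correct and is essentially the paper's argument: a vertex of $F$ adjacent to $A$ would enlarge the component containing the connected $r$-set $A$ to at least $r+1$ vertices, while for $F\subseteq V(G)\setminus N_G[A]$ one has $G[F\cup A]=G[F]\sqcup G[A]$, which gives both inclusions. Your closing remark on necessity is slightly overstated: if $A$ has no neighbours, the equality survives even with $|A|<r$. That remark plays no role in the proof, however.
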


\begin{proof}
Let $F \subseteq V(G)\setminus A$. By definition,
\[
F\in \lk_{\Ind_r(G)}(A)
\quad \Longleftrightarrow \quad
F\cup A \in \Ind_r(G).
\]
Equivalently, every connected component of
\[
G[F\cup A]
\]
has at most $r$ vertices.

Since $G[A]$ is connected and $|A|=r$, the set $A$ forms a connected component of size $r$ in $G[A]$. If some vertex of $F$ were adjacent to a vertex of $A$, then in $G[F\cup A]$ this vertex would belong to the same connected component as $A$, producing a connected component with at least $r+1$ vertices. Hence
\[
F \cap N_G[A]=\varnothing,
\]
and therefore
\[
F \subseteq V(G)\setminus N_G[A].
\]

Conversely, let
\[
F \subseteq V(G)\setminus N_G[A].
\]
Then no vertex of $F$ is adjacent to a vertex of $A$, so
\[
G[F\cup A]
=
G[A]\sqcup G[F].
\]
Since $G[A]$ is connected of size $r$, the condition that every connected component of $G[F\cup A]$ has at most $r$ vertices is equivalent to requiring that every connected component of $G[F]$ has at most $r$ vertices. Therefore,
\[
F\cup A\in \Ind_r(G)
\quad\Longleftrightarrow\quad
F\in \Ind_r(G\setminus N_G[A]).
\]
Hence
\[
\lk_{\Ind_r(G)}(A)
=
\Ind_r(G\setminus N_G[A]).
\]
\end{proof}
These preliminaries will be used in the next section to establish the vertex decomposability of $r$-independence complexes of trees.

\section{Main Theorem}\label{sec: 3}

In this section, we prove that for every integer $r \ge 1$, the $r$-independence complex of a tree is vertex decomposable. The argument is divided into three steps. First, we characterize shedding vertices of $\Ind_r(T)$ in terms of rooted subtrees.
Next, we show that links with respect to connected subgraphs
inherit compatible shedding vertices. Finally, we combine these
results inductively to establish vertex decomposability. 

We first give a structural description of shedding vertices of $\Ind_r(T)$. Informally, the lemma states that a vertex is shedding exactly when enough small rooted subtrees adjacent to it form a connected configuration of size at least $r+1$.

\begin{lemma}\label{lem: all shedding vertices}
Let $T$ be a tree with $|V(T)| \ge r+1$ and $r \ge 1$. A vertex $v \in V(T)$ is a shedding vertex of $\Ind_r(T)$ if and only if, when $T$ is viewed as rooted at $v$, there exists a subset $A \subseteq \mathrm{Ch}(v)$ such that $|V(T_u)| \le r$ for every $u \in A$ and 
\[
\Big|\Big(\bigsqcup_{u \in A} V(T_u)\Big) \cup \{v\}\Big| \ge r+1.
\]
\end{lemma}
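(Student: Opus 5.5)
The plan is to read ``shedding vertex'' through the facet criterion recorded after Definition~\ref{def: VD}: $v$ is shedding (in the facet sense) exactly when no facet $F$ of $\del_{\Ind_r(T)}(v)$ has $F\cup\{v\}\in\Ind_r(T)$. I will treat the recursive vertex decomposability of the link and deletion separately later in the paper. Root $T$ at $v$. Then $T\setminus v=\bigsqcup_{u\in \Ch(v)} T_u$. By Lemma~\ref{lem: del} and Lemma~\ref{lem: disjoint union},
\[
\del_{\Ind_r(T)}(v)=\Ind_r(T\setminus v)=\mathop{\ast}_{u\in\Ch(v)}\Ind_r(T_u).
\]
Hence the facets $F$ of the deletion are exactly the unions $F=\bigsqcup_u F_u$ with each $F_u$ a facet of $\Ind_r(T_u)$.

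The key observation concerns $F\cup\{v\}$. Its connected components are those of $F$ not meeting $\Ch(v)$, together with one component $C_v$ containing $v$. The component $C_v$ is $\{v\}$ together with the components of $F_u$ containing $u$, over those children $u$ with $u\in F_u$. So $F\cup\{v\}\in\Ind_r(T)$ if and only if $|C_v|\le r$.

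For the direction ($\Leftarrow$), suppose $A$ is as in the statement and let $F$ be any facet of the deletion. For $u\in A$ we have $|V(T_u)|\le r$, so $V(T_u)$ itself is $r$-independent, and hence the facet $F_u$ equals $V(T_u)$ by maximality. Thus $C_v\supseteq \{v\}\cup\bigsqcup_{u\in A}V(T_u)$, which is connected of size at least $r+1$. Therefore $F\cup\{v\}\notin\Ind_r(T)$ for every facet $F$, so $v$ is shedding.

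For the direction ($\Rightarrow$), I argue by contraposition. Suppose no such $A$ exists. Taking $A_0=\{u\in\Ch(v): |V(T_u)|\le r\}$ then gives
\[
1+\sum_{u\in A_0}|V(T_u)|\le r.
\]
I will build a facet $F$ of the deletion with $|C_v|\le r$. For $u\in A_0$, take $F_u=V(T_u)$. For $u\notin A_0$ we have $|V(T_u)|\ge r+1$, and the main step is to produce a facet $F_u$ of $\Ind_r(T_u)$ with $u\notin F_u$. To do this, choose a connected set $B\subseteq V(T_u)$ with $u\in B$ and $|B|=r+1$; one exists by growing a subtree from $u$. The set $B\setminus\{u\}$ has at most $r$ vertices, so it is $r$-independent. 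Extend it to a facet $F_u$ of $\Ind_r(T_u)$. If $u$ were in $F_u$, then $B\subseteq F_u$ would be a connected set of $r+1$ vertices, which is impossible. So $u\notin F_u$.

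With these choices, $C_v=\{v\}\cup\bigsqcup_{u\in A_0}V(T_u)$ has size at most $r$. Every other component of $F\cup\{v\}$ is a component of some $F_u$ and so has size at most $r$. Hence $F\cup\{v\}$ is a face, and $v$ is not shedding. The only nontrivial step is the construction of the facet avoiding $u$ for the large subtrees, which the set $B$ handles. The hypothesis $|V(T)|\ge r+1$ only guarantees that the complex is not a simplex, so that the question is meaningful.
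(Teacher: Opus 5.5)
Your proposal is correct and follows essentially the same route as the paper: the join decomposition $\del_{\Ind_r(T)}(v)=\ast_{u\in\Ch(v)}\Ind_r(T_u)$ forces $V(T_u)\subseteq F$ for the small subtrees in ($\Leftarrow$), and for ($\Rightarrow$) a connected $(r+1)$-set through each large child $u$ yields a facet of the deletion that avoids $u$, so $F\cup\{v\}$ remains a face. The only difference is cosmetic: you build the $u$-avoiding facet separately in each join factor, whereas the paper extends $H=\bigsqcup_{u}(V(C_u)\setminus\{u\})$ to a facet of the whole deletion at once. Your remark that $|B\setminus\{u\}|=r$ already makes $B\setminus\{u\}$ $r$-independent also sidesteps the paper's imprecise claim that the components of $T[H]$ are exactly the graphs $C_u\setminus\{u\}$; these graphs need not be connected.
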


\begin{proof}
$(\Leftarrow)$ Assume that for a vertex $v \in V(T)$, when $T$ is viewed as rooted at $v$, there exists a subset $A \subseteq \mathrm{Ch}(v)$ such that $|V(T_u)| \le r$ for every $u \in A$ and 
\[
\Big|\Big(\bigsqcup_{u \in A} V(T_u)\Big) \cup \{v\}\Big| \ge r+1.
\]

After deleting the vertex $v$ from $T$, we obtain
\[
T \setminus \{v\}
=
\bigsqcup_{u \in \mathrm{Ch}(v)} T_u.
\]
Thus, by repeated application of Lemmas~\ref{lem: disjoint union} and ~\ref{lem: del},
\begin{equation}\label{eq: first del}
\del_{\Ind_r(T)}(v)
=
\Ind_r(T \setminus \{v\})
=
\underset{u \in \mathrm{Ch}(v)}{*} \Ind_r(T_u).
\end{equation}

Since $|V(T_u)| \le r$ for each $u \in A$, we have $\Ind_r(T_u) = \Delta_{V(T_u)}$. Let $F$ be a facet of $\del_{\Ind_r(T)}(v)$. Note that if $F$ is not a facet of $\Ind_r(T)$, then $F \cup \{v\}$ must be a face of $\Ind_r(T)$. By the join decomposition in Equation~\eqref{eq: first del}, the facet $F$ contains every vertex of $T_u$ for each $u \in A$. Hence, the induced subgraph $T[F \cup \{v\}]$ contains
\[
T\Big[\Big(\bigsqcup_{u \in A} V(T_u)\Big) \cup \{v\}\Big],
\]
which is connected since $v$ is adjacent to every vertex in $A$. By assumption,
\[
\Big|\Big(\bigsqcup_{u \in A} V(T_u)\Big) \cup \{v\}\Big| \ge r+1.
\]
Therefore, by the definition of the complex, $F \cup \{v\} \notin \Ind_r(T)$.

Hence, every facet of $\del_{\Ind_r(T)}(v)$ is also a facet of $\Ind_r(T)$, and therefore $v$ is a shedding vertex.

\medskip
\noindent
$(\Rightarrow)$ We shall prove this by method of contradiction. Assume there exists a vertex $v \in V(T)$ such that when $T$ is viewed as rooted at $v$, there does not exist a subset $A \subseteq \mathrm{Ch}(v)$ such that $|V(T_u)| \le r$ for every $u \in A$ and 
\[
\Big|\Big(\bigsqcup_{u \in A} V(T_u)\Big) \cup \{v\}\Big| \ge r+1.
\]

We will show that $v$ is not a shedding vertex of $\Ind_r(T)$ by exhibiting a facet of $\del_{\Ind_r(T)}(v)$ that is not a facet of $\Ind_r(T)$.

Partition the set of children of $v$ as
\[
\mathrm{Ch}(v) = A_1 \sqcup A_2,
\]
where $|V(T_u)| \le r$ for each $u \in A_1$ and $|V(T_u)| \ge r+1$ for each $u \in A_2$. By assumption, we have
\begin{equation}\label{eq: assum}
\Big|\Big(\bigsqcup_{u \in A_1} V(T_u)\Big) \cup \{v\}\Big| < r+1.
\end{equation}
Since $|V(T)| \ge r+1$, equation~\eqref{eq: assum} implies that
\[
V(T)\setminus \Big(\Big(\bigsqcup_{u \in A_1} V(T_u)\Big)\cup \{v\}\Big)\neq \emptyset.
\]
Hence there exists a child $u$ of $v$ such that $|V(T_u)| \ge r+1$, and therefore $A_2 \neq \emptyset$.

As before, after deleting $v$ we obtain
\[
T \setminus \{v\}
=
\bigsqcup_{u \in \mathrm{Ch}(v)} T_u,
\]
and hence
\begin{equation}\label{eq: first del 1}
\del_{\Ind_r(T)}(v)
=
\Ind_r(T \setminus \{v\})
=
\underset{u \in \mathrm{Ch}(v)}{*} \Ind_r(T_u).
\end{equation}

Since $|V(T_u)| \le r$ for each $u \in A_1$, we have $\Ind_r(T_u) = \Delta_{V(T_u)}$ for every $u \in A_1$. Thus, every facet $F$ of $\del_{\Ind_r(T)}(v)$ contains the set $\bigsqcup\limits_{u \in A_1} V(T_u)$.

Now, for each $u \in A_2$, since $T_u$ is connected and $|V(T_u)| \ge r+1$, there exists a connected subgraph $C_u$ of $T_u$ of size $r+1$ containing $u$. Indeed, starting with the vertex $u$, one can iteratively add a vertex adjacent to a vertex from the current subgraph until obtaining a connected subgraph on $r+1$ vertices. Then $C_u \setminus \{u\}$ has size $r$. Define
\[
H = \bigsqcup\limits_{u \in A_2} V(C_u \setminus \{u\}).
\]
Since the rooted subtrees $T_u$, $u \in A_2$, are pairwise disjoint, the graphs $C_u \setminus u$ are also pairwise disjoint. Hence, $H$ is a face of $\del_{\Ind_r(T)}(v)$, since the connected components of the induced subgraph $T[H]$ are precisely the disjoint union of graphs $C_u \setminus \{u\}$, each of size exactly $r$, and $v \notin H$.

Let $F$ be a facet of $\del_{\Ind_r(T)}(v)$ with $H \subseteq F$. Then $v \notin F$ and $F \in \Ind_r(T)$. We claim that $A_2 \cap F = \emptyset$. Suppose, for contradiction, that there exists $u \in A_2 \cap F$. Since $u \in A_2$, we know that there exists a connected subgraph $C_u$ of $T$ containing $u$ with $|V(C_u)| = r+1$. Because $H \subseteq F$ and $u \in F$, we have that $C_u$ is a connected subgraph of $T[F]$. Hence $T[F]$ contains a connected subgraph on $r+1$ vertices, contradicting the fact that $F \in \Ind_r(T)$. Therefore, $A_2 \cap F = \emptyset$.

Now, we show that $F \cup \{v\}$ is a face of $\Ind_r(T)$. Observe that every connected component of $T[F \cup \{v\}]$ either lies entirely inside $T[F]$ or is the component containing $v$. Since $F \in \Ind_r(T)$, every connected component of $T[F]$ has size at most $r$. Moreover, as $A_2 \cap F = \emptyset$, the component of $T[F \cup \{v\}]$ containing $v$ is particularly the following component,
\[
T \Big( \Big(\bigsqcup_{u \in A_1} V(T_u)\Big)\cup \{v\} \Big),
\]
whose size is strictly less than $r+1$. Hence this component also has size at most $r$. Therefore every connected component of $T[F \cup \{v\}]$ has size at most $r$, and so $F \cup \{v\} \in \Ind_r(T)$.

Consequently, $F$ is not a facet of $\Ind_r(T)$, and hence $v$ is not a shedding vertex.

\medskip
\noindent
This completes the proof.
\end{proof}

\begin{figure}[H]
\centering
\begin{tikzpicture}[
every node/.style={circle,fill=black,inner sep=2pt},
level 1/.style={sibling distance=2cm},
level 2/.style={sibling distance=1cm},
level 3/.style={sibling distance=1cm},
level 4/.style={sibling distance=1cm},
level distance=1.3cm
]

\node[label=above:$v_1$] {}
child { node[label=above:$v_2$] {}
    child { node[label=below:$v_6$] {} }
}
child { node[label=above:$v_3$] {}
    child { node[label=below:$v_7$] {}
    }
}
child { node[label=above:$v_4$] {}
    child { node[label=right:$v_8$] {} 
    child { node[label=below:$v_9$] {} }
    }
}
child { node[label=above:$v_5$] {}
    child { node[label=right:$v_{10}$] {}
        child { node[label=below:$v_{11}$] {} }
        child { node[label=below:$v_{12}$] {} }
        child { node[label=below:$v_{13}$] {} }
    }
};

\end{tikzpicture}
\caption{}
\label{fig:tree13}
\end{figure}

\begin{example}\label{ex: shedding vertex}
Consider the rooted tree in Figure~\ref{fig:tree13} with root $v_1$ and $r=4$. The children of $v_1$ are
$v_2,v_3,v_4,v_5$, and the corresponding rooted subtrees satisfy
\[
|V(T_{v_2})|=2,\quad |V(T_{v_3})|=2,\quad |V(T_{v_4})|=3,\quad |V(T_{v_5})|=5.
\]
Hence, the subtrees rooted at $v_2,v_3,v_4$ have size at most $r=4$, while $T_{v_5}$ does not.

Taking $A=\{v_2,v_3\}$, we obtain
\[
\Big|\{v_1\}\cup V(T_{v_2})\cup V(T_{v_3})\Big|
= |\{v_1,v_2,v_6,v_3,v_7\}| = 5 = r+1,
\]
so the condition of Lemma~\ref{lem: all shedding vertices} is satisfied. Hence, $v_1$ is a shedding vertex of $\Ind_4(T)$.

The choice of $A$ is not unique; for instance, $A=\{v_3,v_4\}$ also satisfies
\[
|\{v_1\}\cup V(T_{v_2})\cup V(T_{v_4})|
= |\{v_1,v_3,v_7,v_4,v_8,v_9\}| = 6 \ge r+1.
\]

\qed
\end{example}

The previous lemma provides a structural characterization of shedding vertices in $\Ind_r(T)$. In particular, it implies that for every $r \ge 1$ and every tree $T$ with at least $r+1$ vertices, the complex $\Ind_r(T)$ always admits a shedding vertex.

Indeed, fix an arbitrary vertex as a root of $T$ and move downward from the root to the first vertex $v \in V(T)$ whose rooted subtree $T_v$ has size at least $r+1$, while each child’s rooted subtree has size at most $r$, that is,
\[
|V(T_v)| \ge r+1, \quad |V(T_u)| \le r \ \text{for all } u \in \Ch(v).
\]
Viewing $T$ as rooted at $v$, we see that $v$ satisfies the condition in Lemma~\ref{lem: all shedding vertices}, and hence it is a shedding vertex of $\Ind_r(T)$. Therefore, $\Ind_r(T)$ always has a shedding vertex whenever $|V(T)| \ge r+1$.

\medskip

We next show that links with respect to connected subgraphs also admit shedding vertices satisfying a suitable connectivity condition. This will be the key point in the recursive argument for vertex decomposability.

\begin{lemma}\label{lem: existence of shedding vertex in link}
Let $T$ be a tree with $|V(T)| \ge r+1$ and $r \ge 1$. Let $v$ be a shedding vertex of $\Ind_r(T)$. Then, for every connected subgraph $C$ of $T$ containing $v$ and satisfying $|C| \le r-1$, the complex $\lk_{\Ind_r(T)}(V(C))$ has a shedding vertex $w$ such that the graph $C\cup\{w\}$ is connected.
\end{lemma}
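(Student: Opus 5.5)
Throughout, I would work in $\Lambda=\lk_{\Ind_r(T)}(V(C))$ and view $T$ as rooted at $v$. Since $v$ is a shedding vertex, Lemma~\ref{lem: all shedding vertices} gives a set $A\subseteq \Ch(v)$ with $|V(T_u)|\le r$ for all $u\in A$ and
\[
S:=\Big(\bigsqcup_{u\in A}V(T_u)\Big)\cup\{v\},\qquad |S|\ge r+1 .
\]
The faces of $\Lambda$ are the sets $G\subseteq V(T)\setminus V(C)$ such that $G\cup V(C)$ is $r$-independent. Unlike Lemma~\ref{lem: link}, the face $V(C)$ has fewer than $r$ vertices, so $\Lambda$ is not simply $\Ind_r$ of a subgraph. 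Its deletion at a vertex $w\notin V(C)$ consists of such $G$ with $w\notin G$.

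\emph{Choice of $w$.} Put $S'=S\cup V(C)$. This set is connected, because both $S$ and $C$ contain $v$, and $|S'|\ge r+1>|C|$. I would choose $w\in S'\setminus V(C)$ adjacent to $C$, so that $C\cup\{w\}$ is connected. Where possible I would take $w$ inside one of the small subtrees $T_u$ with $u\in A$, preferably as deep as possible subject to adjacency to $C$, so that the part of $S'$ not yet covered consists of whole small subtrees $T_{u'}$, $u'\in A$, hanging off $C\cup\{w\}$. It is this shape, rather than an arbitrary neighbour of $C$, that I expect the exchange argument below to need. One must also check that $w$ is a vertex of $\Lambda$, that is, $\{w\}\cup V(C)$ is $r$-independent; this holds since $|C|+1\le r$.

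\emph{Shedding condition.} Let $F$ be a facet of $\del_\Lambda(w)$. I must show $F\cup\{w\}\notin\Lambda$, i.e.\ $F\cup V(C)\cup\{w\}$ is not $r$-independent. Suppose instead that it is, and let $K$ be its component containing $C\cup\{w\}$, so $|K|\le r$. Since $S'$ is connected with $|S'|\ge r+1$, some $x\in S'\setminus K$ is adjacent to $K$. Then $x\notin F$, so maximality of $F$ forces $F\cup\{x\}\cup V(C)$ to fail $r$-independence.

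The key point is to show this is impossible. The component of $x$ in $F\cup\{x\}\cup V(C)$ is contained in $(K\setminus\{w\})\cup\{x\}$ together with $F$-pieces inside the subtree below $x$. When $x$ lies in some $T_{u'}$ with $u'\in A$, those pieces lie in a subtree of size at most $r$. The cleanest route is to argue first, by maximality of $F$, that $F$ already contains every vertex of each small subtree $T_{u'}$ ($u'\in A$) not separated from $C$ by $w$; this mirrors how, in Lemma~\ref{lem: all shedding vertices}, the simplex factors $\Ind_r(T_u)=\Delta_{V(T_u)}$ are forced into every facet. Once that holds, $K$ contains all of $S'$, giving $|K|\ge r+1$ and a contradiction.

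\emph{Main obstacle.} The hard step is the claim that a facet of $\del_\Lambda(w)$ must swallow the small subtrees adjacent to $C\cup\{w\}$. Here the cap $r$ is shared with $C$ and with branches of $T$ outside $S$, for instance branches into large children that $C$ may enter. I would handle this by a careful choice of $w$ (the deepest admissible vertex of $S'$, or $v$'s side as appropriate), together with a case split. In one case the component of $C$ in $F\cup V(C)$ already has size $r$, and then adding $w$ trivially exceeds $r$. In the other case it has size less than $r$, and then maximality lets us add the vertices of the small subtrees one at a time.
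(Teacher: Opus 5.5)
\textbf{There is a genuine gap.} It sits in the step you flag as the main obstacle. The claim that a facet $F$ of $\del_\Lambda(w)$ must swallow the small subtrees adjacent to $C\cup\{w\}$ (when the component of $C$ in $F\cup V(C)$ has fewer than $r$ vertices) is false. Your rule for choosing $w$ also does not exclude non-shedding choices.

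Here is a counterexample. Let $r=3$ and let $T$ have edges $va$, $vb$, $bb_1$, $vc$, $cc_1$, $c_1c_{11}$, $cc_2$, $c_2c_{21}$.
\begin{itemize}
\item Rooted at $v$, $|V(T_a)|=1$, $|V(T_b)|=2$ and $|V(T_c)|=5$. So $A=\{a,b\}$ and $v$ is shedding.
\item Take $C=\{v\}$ and $w=a$. This $w$ is adjacent to $C$, lies in a small subtree, and is as deep as any admissible vertex. After choosing it, the uncovered part of $S'$ is exactly the whole small subtree $T_b$, which is the shape your criterion favours.
\item The set $F=\{c,b_1,c_{11},c_{21}\}$ is a facet of $\del_\Lambda(a)$. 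Indeed, $F\cup\{v\}$ has components $\{v,c\}$, $\{b_1\}$, $\{c_{11}\}$, $\{c_{21}\}$, and adding any of $b,c_1,c_2$ creates a component with $4$ vertices.
\item In $F\cup\{v,a\}$ the largest component is $\{v,a,c\}$, of size $3$. Hence $F\cup\{a\}\in\Lambda$, and $a$ is not a shedding vertex.
\end{itemize}
Here the component of $C$ has size $2<r$, yet $b\notin F$, because adding $b$ also absorbs $b_1\in F$. So ``adding the small subtrees one vertex at a time'' breaks down. For the same reason, trying to prove that $F\cup\{x\}\cup V(C)$ is $r$-independent cannot succeed in general. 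Your other case, where the component of $C$ already has size $r$, is fine.

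\textbf{What is missing is the right choice of $w$ together with an exchange argument.} Set $S=\bigsqcup_{u\in A}V(T_u)$, without $v$, and take $w\in S\setminus V(C)$ with $|V(T_w)|$ maximal. This automatically forces the parent of $w$ into $C$.

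The only containment that maximality of $F$ actually yields is $V(T_w)\setminus\{w\}\subseteq F$. Those vertices are cut off from $C$ by $w$, and $|V(T_w)|\le r$. If $S\setminus(V(C)\cup V(T_w))\subseteq F$, then $S\cup\{v\}\subseteq F\cup V(C)\cup\{w\}$ and you are done.

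Otherwise, take $x\in S\setminus(V(C)\cup V(T_w))$ with $x\notin F$. Do not try to contradict maximality; exploit the failure instead.
\begin{itemize}
\item $F\cup V(C)\cup\{x\}$ has a component $C'$ with at least $r+1$ vertices, and it contains $x$.
\item Since $x$ lies in a subtree $T_u$ with at most $r$ vertices, $C'$ must leave $T_u$. So it passes through $v$ and contains $C$.
\item $C'\setminus V(T_x)$ is still connected and contains $C$.
\item $C'\setminus V(T_x)$ is disjoint from $V(T_w)$, since $C'$ contains $v$ but not $w$.
\item Therefore $(C'\setminus V(T_x))\cup V(T_w)$ is a connected subset of $F\cup V(C)\cup\{w\}$. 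It has at least $|C'|\ge r+1$ vertices because $|V(T_w)|\ge|V(T_x)|$.
\end{itemize}
In your notation this gives $|K|\ge r+1$ directly. In the example above, this rule selects $w=b$, which is indeed a shedding vertex of $\Lambda$.
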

\begin{proof}
Let $\Delta = \lk_{\Ind_r(T)}(C)$, and root the tree $T$ at $v$. Throughout, we do not distinguish between a vertex set and the subgraph it induces; in particular, we identify a subgraph with its vertex set when no ambiguity arises.

Since $v$ is a shedding vertex of $\Ind_r(T)$, Lemma~\ref{lem: all shedding vertices} yields a subset $A\subseteq \Ch(v)$ such that $|V(T_u)|\le r$ for every $u\in A$ and
\[
\left|\left(\bigsqcup_{u\in A}V(T_u)\right)\cup\{v\}\right|
\ge r+1.
\]
Set
\[
S=\bigsqcup_{u\in A}V(T_u).
\]
Then $|S|\ge r$ and the induced subgraph $T[S \cup \{v\}]$ is connected of size at least $r+1$. Since $|C|\le r-1$, the set $S\setminus C$ is non-empty. Choose
\[
w\in S\setminus C
\]
so that the rooted subtree \(T_w\) has the largest vertex set among all rooted subtrees \(T_x\) with \(x \in S \setminus C\).

We first show that $\operatorname{parent}(w)\in C$. Suppose $\operatorname{parent}(w) \notin C$. Then
\[
\operatorname{parent}(w)\in S\setminus C.
\]
Since $T_w$ is a proper rooted subtree of $T_{\operatorname{parent}(w)}$, we obtain
\[
|V(T_{\operatorname{parent}(w)})|
>
|V(T_w)|,
\]
contradicting the maximality of $T_w$. Hence $\operatorname{parent}(w)\in C$.

As $C$ is connected and $\operatorname{parent}(w)\in C$, it follows that the subgraph $C\cup\{w\}$ is connected.

We claim that \(w\) is a shedding vertex of \(\Delta\). By definition, it suffices to show that no facet of \(\del_\Delta(w)\) is a face of \(\lk_\Delta(w)\).

Let \(F\) be a facet of \(\del_\Delta(w)\). We will show that that $F\cup\{w\}\notin\Delta$.

We first show
\[
V(T_w)\setminus\{w\}\subseteq F.
\]

Let \(y\) be a descendant of \(w\), i.e., \(y \in V(T_w)\setminus\{w\}\). Suppose, for contradiction, that \(y\notin F\).
We show that
\[
F\cup\{y\}\in\del_\Delta(w),
\]
contradicting the fact that $F$ is a facet of $\del_{\Delta}(w)$.

Since \(y\) is a descendant of \(w\), every unique path from \(y\) to a vertex outside \(T_w\) passes through \(w\). Because $w\notin F\cup C$, it follows that in the induced subgraph
\[
T[F\cup C\cup\{y\}],
\]
the vertex \(y\) lies in a connected component entirely contained in \(T_w \setminus w\).

Moreover,
\[
|V(T_w)|\le r,
\]
because \(w\in S\) and every rooted subtree corresponding to a vertex in \(A\) has at most
\(r\) vertices. Hence every connected component of
\[
T[F\cup C\cup\{y\}]
\]
has at most \(r\) vertices. Therefore
\[
F\cup C\cup\{y\}\in\Ind_r(T),
\]
which implies that $F\cup\{y\}$ is a face of $\Delta$. Since $w\notin F\cup\{y\}$,
we obtain
\[
F\cup\{y\}\in\del_\Delta(w).
\]
But this properly contains the facet \(F\), a contradiction.

Therefore every descendant of \(w\) belongs to \(F\). Equivalently,
\[
V(T_w)\setminus\{w\}\subseteq F.
\]

We now consider two cases.

\medskip
\noindent
\textbf{Case 1.}
Assume that
\[
S\setminus(C\cup T_w)\subseteq F.
\]

Together with
\[
V(T_w)\setminus\{w\}\subseteq F,
\]
this implies that every vertex of \(S\setminus\{w\}\) belongs to \(F\cup C\). Recall, a shedding vertex $v \in C$. Hence
\[
S\cup\{v\}\subseteq F\cup C\cup\{w\}.
\]

Therefore the induced subgraph $T[F\cup C\cup\{w\}]$ contains the connected subgraph $T[S\cup\{v\}]$, which has at least \(r+1\) vertices. Consequently, $F\cup C\cup\{w\}$ is not a face of $\Ind_r(T)$, and hence
\[
F\cup\{w\}\notin\Delta.
\]

\medskip
\noindent
\textbf{Case 2.}
Assume now that there exists
\[
x\in S\setminus(C\cup T_w)
\]
such that $x\notin F$. Since \(F\) is a facet of \(\del_\Delta(w)\), the set $F\cup\{x\}$
cannot be a face of \(\del_\Delta(w)\); otherwise it would properly contain the facet \(F\). Hence
\[
F\cup\{x\}\notin\Delta.
\]
Because
\[
\Delta=\lk_{\Ind_r(T)}(C),
\]
it follows that the set $F\cup C\cup\{x\}$ is not a face of $\Ind_r(T)$.

Therefore the induced subgraph
\[
T[F\cup C\cup\{x\}]
\]
has a connected component \(C'\) satisfying
\[
|V(C')|\ge r+1.
\]

As $F$ is a facet of $\del_{\Delta}(w)$, it follows that
\[
F\cup C\in\Ind_r(T),
\]
every connected component of $T[F\cup C]$ has size at most \(r\). Thus the large connected component \(C'\) appears only after adjoining
the vertex \(x\), and therefore
\[
x\in V(C').
\]

Now $x\in S$,
so there exists \(u\in A\) such that $x\in V(T_u)$.
Since
\[
|V(T_u)|\le r
\]
while
\[
|V(C')|\ge r+1,
\]
the component \(C'\) cannot be a subgraph of \(T_u\). Hence \(C'\) contains a
vertex outside \(T_u\).

Because \(C'\) is connected, the unique path from \(x\) to a vertex outside \(T_u\) lies inside
\(C'\). This path must pass through \(u\), and therefore through the root \(v\). Hence
\[
v\in V(C').
\]

Since \(v \in C\) and \(v \in C'\), and \(C'\) is a connected component of \(T[F \cup C \cup \{x\}]\), the connected subgraph $C$ lies entirely in \(C'\). Hence $C$ is a subgraph of $C'$.

Now consider the graph obtained from \(C'\) by deleting the vertices of $T_x$.
Since every vertex of \(T_x \setminus \{x\}\) is connected to the rest of \(T\) only through \(x\), removing \(V(T_x)\) does not disconnect any path in \(C' \setminus V(T_x)\). Hence
\[
C'' := C' \setminus V(T_x)
\]
is connected.

We claim that \(C \subseteq C''\). Since \(C \subseteq C'\), it suffices to show that \(C \cap V(T_x) = \emptyset\).

Suppose, for contradiction, that there exists a vertex \(y \in C \cap V(T_x)\). Since \(T_x \setminus \{x\}\) is attached to the rest of the tree only through \(x\), any path in \(T\) from \(y\) to a vertex outside \(T_x\) must pass through \(x\). Because \(C\) is connected and contains such a vertex outside \(T_x\) (as \(x \notin C\)), this would force \(x \in C\), contradicting \(x \in S \setminus C\). Hence \(C \cap V(T_x) = \emptyset\).

Therefore no vertex of \(C\) is removed when passing from \(C'\) to \(C'' = C' \setminus V(T_x)\), and thus
\[
C \subseteq C''.
\]

Next we attach the rooted subtree \(T_w\) to $C''$ through the $\operatorname{parent}(w)$. Since $\operatorname{parent}(w)\in C$ which is a subgraph of  $C''$,
the root \(w\) is adjacent to a vertex of \(C''\). Therefore
\[
C''\cup T_w
\]
is connected.

Moreover, by the maximal choice of \(w\),
\[
|V(T_w)|\ge |V(T_x)|.
\]
Hence
\[
|V(C'') \cup V(T_w)|
\ge
|V(C'') \cup V(T_x)| \ge r+1.
\]

Finally, since
\[
V(T_w)\setminus\{w\}\subseteq F,
\]
every vertex of \(T_w\) belongs to \(F\cup\{w\}\). Consequently,
\[
C''\cup T_w
\]
is a connected subgraph of
\[
T[F\cup C\cup\{w\}].
\]

Therefore
\[
T[F\cup C\cup\{w\}]
\]
contains a connected subgraph with at least \(r+1\) vertices. Hence
\[
F\cup C\cup\{w\}\notin\Ind_r(T),
\]
and thus
\[
F\cup\{w\}\notin\Delta.
\]

Thus no facet of \(\del_\Delta(w)\) is a facet of \(\lk_\Delta(w)\). Therefore \(w\) is a
shedding vertex of \(\Delta\).
\end{proof}

We illustrate the construction of the shedding vertex in Lemma~\ref{lem: existence of shedding vertex in link} with the following example.

\begin{example}
Consider the rooted tree in Figure~\ref{fig:tree13} with root $v_1$ and $r=4$. From Example~\ref{ex: shedding vertex}, we know that $v_1$ is a shedding vertex of $\Ind_4(T)$. 

Let 
\[
C = T[\{v_1,v_4\}]
\]
be a connected subgraph of $T$ containing the shedding vertex $v_1$. Again from Example~\ref{ex: shedding vertex}, we may take
\[
A=\{v_3,v_4\},
\]
which satisfies the conditions of Lemma~\ref{lem: all shedding vertices}. Hence,
\[
V(T_{v_3}) \cup V(T_{v_4}) = \{v_3,v_7\} \cup \{v_4,v_8,v_9\}
= \{v_3,v_4,v_7,v_8,v_9\}.
\]

Now consider the set
\[
\big(V(T_{v_3}) \cup V(T_{v_4})\big)\setminus C
= \{v_3,v_7,v_8,v_9\}.
\]
Among these vertices, we compare the sizes of their rooted subtrees:
\[
|V(T_{v_3})|=2,\quad |V(T_{v_8})|=2,\quad |V(T_{v_7})|=1,\quad |V(T_{v_9})|=1.
\]
Since $v_3$ and $v_8$ has the largest rooted subtree among these candidates, one may choose $w=v_3$ or $w=v_8$. WLOG, choose $w = v_8$. Since, the parent of $v_8$ is $v_4$ which is in $C$, $C\cup\{v_8\}$ is connected in $T$.

We now show that $v_8$ is a shedding vertex of $\Delta=\lk_{\Ind_4(T)}(C)$. Let $F$ be a facet of $\del_{\lk_{\Ind_4(T)}(C)}(v_8)$. By definition, $v_8 \notin F$ and $F \cup C \in \lk_{\Ind_4(T)}(C)$.

Since $|V(T_{v_8})|=2 \le r=4$, the subtree rooted at $v_8$ consists only of $v_8$ and its descendant $v_9$. As $v_8 \notin F$, every connected subgraph containing $v_9$ and a vertex outside $T_{v_8}$ must have a vertex $v_8$. Thus every facet of $\del_{\lk_{\Ind_r(T)}(C)}(v_8)$ contains $v_9$. 

Suppose $v_3 \notin F$. Since $F$ is a facet of $\del_{\lk_{\Ind_4(T)}(C)}(v_8)$,
\[
F \cup \{v_1,v_3,v_4\} \notin \Ind_4(T).
\]
Since
\[
F \cup \{v_1,v_4\} \in \Ind_4(T),
\]
there exists a connected component of size at least $5$ in
\[
T[F \cup \{v_1,v_3,v_4\}]
\]
containing $v_3$.

Suppose $v_2,v_7 \in F$. Then
\[
T[\{v_1,v_2,v_3,v_4,v_7\}]
\]
is connected. Since $v_9 \in F$, replacing $v_3,v_7$ by $v_8,v_9$ gives the connected induced subgraph
\[
T[\{v_1,v_2,v_4,v_8,v_9\}],
\]
which has size $5$. Hence
\[
F \cup \{v_1,v_4,v_8\} \notin \Ind_4(T).
\]
Therefore
\[
F \cup \{v_8\} \notin \lk_{\Ind_4(T)}(C).
\]

This example illustrates the main idea behind Lemma~\ref{lem: existence of shedding vertex in link}: a maximal rooted subtree outside $C$ can replace another rooted subtree while preserving a forbidden connected component.

\qed
\end{example} 
The existence of shedding vertices established above allows us to prove that links with respect to connected subgraphs are themselves vertex decomposable.

\begin{lemma}\label{lem: shedding vertex for the particular link}
Let $T$ be a tree with $|V(T)| \ge r+1$ and let $r \ge 1$. Assume that $\Ind_r(T')$ is vertex decomposable for every tree $T'$ with $|V(T')|<|V(T)|$. Let $v$ be a shedding vertex of $\Ind_r(T)$. Then, for every connected subgraph $C$ of $T$ containing $v$ with $|C|\le r$, the complex
\[
\lk_{\Ind_r(T)}(C)
\]
is vertex decomposable.
\end{lemma}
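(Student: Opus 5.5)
I would prove the statement by downward induction on $|C|$, running from $|C|=r$ down to $|C|=1$; in the end only $C=\{v\}$ is needed. All trees $T'$ with fewer vertices than $T$ are handled by the standing hypothesis.

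\textbf{Base case $|C|=r$.} Since $C$ is connected with exactly $r$ vertices, Lemma~\ref{lem: link} gives
\[
\lk_{\Ind_r(T)}(C)=\Ind_r\bigl(T\setminus N_T[C]\bigr).
\]
The graph $T\setminus N_T[C]$ is a forest, and every component has strictly fewer vertices than $T$ because $C\neq\emptyset$. The inductive hypothesis makes each component's complex vertex decomposable. Lemma~\ref{lem: disjoint union} then makes their join vertex decomposable, with empty forests giving the complex $\{\emptyset\}$, which is a simplex.

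\textbf{Inductive step $|C|\le r-1$.} Here I use Lemma~\ref{lem: existence of shedding vertex in link}. It supplies a vertex $w$ that is a shedding vertex of $\Delta=\lk_{\Ind_r(T)}(C)$ and for which $C\cup\{w\}$ is connected. Since $w$ is shedding, it remains to check that $\lk_\Delta(w)$ and $\del_\Delta(w)$ are vertex decomposable; the facet condition is already part of the shedding property.

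For the link, Lemma~\ref{lem: identities}(2) gives
\[
\lk_\Delta(w)=\lk_{\Ind_r(T)}(C\cup\{w\}).
\]
The set $C\cup\{w\}$ is connected, contains $v$, and has size $|C|+1\le r$. The downward induction on $|C|$ therefore applies to it.

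For the deletion, Lemma~\ref{lem: identities}(1) and Lemma~\ref{lem: del} give
\[
\del_\Delta(w)=\lk_{\del_{\Ind_r(T)}(w)}(C)=\lk_{\Ind_r(T\setminus w)}(C).
\]
The forest $T\setminus w$ splits as $T_w\sqcup (T\setminus V(T_w))$, taking $T$ rooted at $v$. Because $C\cup\{w\}$ is connected and $w\notin C$, the set $C$ contains $\operatorname{parent}(w)$ and lies entirely in the component $T'':=T\setminus V(T_w)$. By Lemma~\ref{lem: disjoint union}, the link of $C$ in the join is $\Ind_r(T_w)*\lk_{\Ind_r(T'')}(C)$. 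The factor $\Ind_r(T_w)$ is vertex decomposable by the induction on trees, since $|V(T_w)|<|V(T)|$.

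\textbf{Main obstacle.} The hard part is $\lk_{\Ind_r(T'')}(C)$. Here $T''$ is a smaller tree, but this is a link of $C$, not a full independence complex, and $v$ need not remain a shedding vertex of $\Ind_r(T'')$. Two cases arise.
\begin{itemize}
\item If $|V(T'')|\le r$, the complex $\Ind_r(T'')$ is a simplex, hence so is the link.
\item Otherwise, I would strengthen the whole statement: prove the lemma in the stronger form ``for every tree $T$ and every connected $C$ containing some shedding vertex'', by double induction on $(|V(T)|,\,r-|C|)$.
\end{itemize}
Within the stronger form, I would first check whether $v$ is still shedding in $T''$ via Lemma~\ref{lem: all shedding vertices}. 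The witness set $A$ may have lost $T_w$, and the choice of $w$ as a maximal subtree root is what I would try to exploit here. If that fails, I would instead choose a shedding vertex $v''$ of $\Ind_r(T'')$ and argue directly that $\lk_{\Ind_r(T'')}(C)$ is vertex decomposable. One route is to enlarge $C$ along a path toward $v''$. The other is to note that a link of a connected set not containing $v''$ can be rewritten, via Lemma~\ref{lem: link} after padding $C$ to size $r$, as an independence complex of a smaller forest. I expect this reconciliation of the shedding vertex after deletion to be the delicate point.
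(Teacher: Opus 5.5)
\textbf{There is a genuine gap in the deletion half of your inductive step.} Your base case $|C|=r$ and the link half of the step ($\lk_\Delta(w)=\lk_{\Ind_r(T)}(C\cup\{w\})$ plus downward induction on $|C|$) match the paper.

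You correctly reduce $\del_\Delta(w)$ to $\lk_{\Ind_r(T\setminus w)}(C)$ and then to a factor $\lk_{\Ind_r(T'')}(C)$. You then leave this factor as the ``main obstacle'' and offer only a tentative strengthened induction whose key steps are never carried out. That induction also does not close:
\begin{itemize}
\item $C$ need not contain a shedding vertex of $\Ind_r(T'')$, so your stronger form does not apply to it.
\item The ``padding'' route is not valid. Lemma~\ref{lem: link} only describes links of connected sets of size exactly $r$, and $\lk(C)$ is not the link of any larger set $C'\supsetneq C$.
\end{itemize}

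\textbf{The missing idea.} If $\Gamma$ is vertex decomposable and $\sigma\in\Gamma$, then $\lk_\Gamma(\sigma)$ is vertex decomposable. This is the standard result from Jonsson's book that the paper invokes. With it, the step is immediate:
\begin{itemize}
\item $T\setminus w$ is a forest of trees with fewer vertices than $T$, so $\Ind_r(T\setminus w)$ is vertex decomposable by the standing hypothesis and Lemma~\ref{lem: disjoint union}.
\item $C$ is a face of $\Ind_r(T\setminus w)$, since it is connected, $|C|\le r-1$ and $w\notin C$.
\item Hence $\del_\Delta(w)=\lk_{\Ind_r(T\setminus w)}(C)$ is vertex decomposable.
\end{itemize}
No splitting off of $T_w$ and no tracking of shedding vertices in $T''$ is needed.

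\textbf{Proving the fact yourself.} If you prefer not to cite it, it follows by induction on the number of vertices. Take a shedding vertex $x$ of $\Gamma$. There are three cases.
\begin{itemize}
\item If $x\in\sigma$, then $\lk_\Gamma(\sigma)=\lk_{\lk_\Gamma(x)}(\sigma\setminus\{x\})$.
\item If $x\notin\sigma$ and $\sigma\cup\{x\}\notin\Gamma$, then $\lk_\Gamma(\sigma)=\lk_{\del_\Gamma(x)}(\sigma)$.
\item Otherwise $x$ is a shedding vertex of $\lk_\Gamma(\sigma)$. By Lemma~\ref{lem: identities}, its link is $\lk_{\lk_\Gamma(x)}(\sigma)$ and its deletion is $\lk_{\del_\Gamma(x)}(\sigma)$, both vertex decomposable by induction. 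The facet condition transfers: every facet $G$ of $\lk_{\del_\Gamma(x)}(\sigma)$ has $G\cup\sigma$ a facet of $\del_\Gamma(x)$, hence of $\Gamma$. So $G\cup\{x\}\notin\lk_\Gamma(\sigma)$.
\end{itemize}

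\textbf{A minor slip.} $T\setminus w$ splits as $(T_w\setminus w)\sqcup T''$, not $T_w\sqcup T''$, because $w$ itself is deleted. This does not affect vertex decomposability of that factor, and the splitting is unnecessary once the link fact is used.
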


\begin{proof}
Let
\[
\Delta=\lk_{\Ind_r(T)}(C).
\]

We proceed by induction on $|V(T)|-|C|$.

If $|V(T)|-|C|=1$, then $|V(T)|=r+1$ and $|C|=r$. Hence,
\[
\Delta=\{\emptyset\},
\]
which is vertex decomposable.

Assume that for every connected subgraph $C'$ of $T$ containing $v$ with
\[
|V(T)|-|C'|<|V(T)|-|C|,
\]
the complex $\lk_{\Ind_r(T)}(C')$ is vertex decomposable.

First suppose that $|C|=r$. By Lemma~\ref{lem: link},
\[
\Delta=\Ind_r(T\setminus N_T[C]).
\]
Since $T\setminus N_T[C]$ is a forest whose connected components have fewer vertices than $T$, the hypothesis implies that the $r$-independence complex of each connected component is vertex decomposable. Therefore, by Lemma~\ref{lem: disjoint union}, $\Delta$ is vertex decomposable.

Hence we may assume that $|C|\le r-1$. By Lemma~\ref{lem: existence of shedding vertex in link}, the complex $\Delta$ has a shedding vertex $w$ such that $C\cup\{w\}$ is connected.

By Lemma~\ref{lem: identities} and Lemma~\ref{lem: del},
\[
\del_\Delta(w)
=
\del_{\lk_{\Ind_r(T)}(C)}(w)
=
\lk_{\del_{\Ind_r(T)}(w)}(C)
=
\lk_{\Ind_r(T\setminus w)}(C).
\]

Since $T\setminus w$ is a forest whose connected components each have fewer vertices than $T$, the hypothesis implies that the $r$-independence complex of every connected component of $T\setminus w$ is vertex decomposable. Therefore, by Lemma~\ref{lem: disjoint union}, $\Ind_r(T\setminus w)$ is vertex decomposable.

By \cite[Theorem 3.30]{JJonsson08}, the link of a face in a vertex decomposable complex is again vertex decomposable. Hence,
\[
\lk_{\Ind_r(T\setminus w)}(C)
\]
is vertex decomposable. Consequently, $\del_\Delta(w)$ is vertex decomposable.

Now, by Lemma~\ref{lem: identities},
\[
\lk_\Delta(w)
=
\lk_{\Ind_r(T)}(C\cup\{w\}).
\]
Since $C\cup\{w\}$ is connected and
\[
|V(T)|-|C\cup\{w\}|
<
|V(T)|-|C|,
\]
the induction hypothesis implies that $\lk_\Delta(w)$ is vertex decomposable.

Thus both $\del_\Delta(w)$ and $\lk_\Delta(w)$ are vertex decomposable. Since $w$ is a shedding vertex of $\Delta$, it follows that $\Delta$ is vertex decomposable.
\end{proof}

We are now in a position to combine the previous results and complete the proof of the main theorem.

\begin{theorem}\label{them: V.D for tree}
Let $T$ be a tree and $r \ge 1$. Then $\Ind_r(T)$ is vertex decomposable.
\end{theorem}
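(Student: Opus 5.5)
We argue by strong induction on $|V(T)|$, with $r\ge 1$ fixed throughout.

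\textbf{Base case.} If $|V(T)|\le r$, then every subset of $V(T)$ is $r$-independent, so $\Ind_r(T)=\Delta_{V(T)}$ is a simplex and hence vertex decomposable. This includes the empty tree, whose complex is $\{\emptyset\}$.

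\textbf{Inductive step.} Suppose $|V(T)|\ge r+1$, and assume $\Ind_r(T')$ is vertex decomposable for every tree $T'$ with $|V(T')|<|V(T)|$.

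\emph{Choosing a shedding vertex.} By the discussion following Lemma~\ref{lem: all shedding vertices}, $\Ind_r(T)$ has a shedding vertex $v$. Concretely, fix any root and descend to a vertex $v$ with $|V(T_v)|\ge r+1$ whose children all have rooted subtrees of size at most $r$. Then $v$ satisfies the criterion of Lemma~\ref{lem: all shedding vertices}, so every facet of $\del_{\Ind_r(T)}(v)$ is a facet of $\Ind_r(T)$. It remains to check that the deletion and the link of $v$ are both vertex decomposable.

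\emph{The deletion.} By Lemma~\ref{lem: del} and Lemma~\ref{lem: disjoint union},
\[
\del_{\Ind_r(T)}(v)=\Ind_r(T\setminus v)=\underset{u}{*}\,\Ind_r(T^{(u)}),
\]
where $T^{(u)}$ ranges over the connected components of the forest $T\setminus v$. Each $T^{(u)}$ is a tree with fewer vertices than $T$, so each factor is vertex decomposable by the induction hypothesis. Lemma~\ref{lem: disjoint union} then gives that the join is vertex decomposable.

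\emph{The link.} Observe that
\[
\lk_{\Ind_r(T)}(v)=\lk_{\Ind_r(T)}(V(C))\quad\text{with } C=T[\{v\}].
\]
Here $C$ is a connected subgraph containing $v$, and $|C|=1\le r$. The inductive hypothesis on smaller trees is exactly the standing assumption of Lemma~\ref{lem: shedding vertex for the particular link}, so that lemma applies and shows this link is vertex decomposable. (When $r=1$, $|C|=r$ and the lemma reduces to Lemma~\ref{lem: link}, identifying the link with $\Ind_1(T\setminus N_T[v])$ of a smaller forest; this case is covered by the same lemma.)

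\textbf{Conclusion.} Both the link and the deletion of the shedding vertex $v$ are vertex decomposable, so $\Ind_r(T)$ is vertex decomposable by Definition~\ref{def: VD}.

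All the real difficulty already lives in Lemmas~\ref{lem: existence of shedding vertex in link} and~\ref{lem: shedding vertex for the particular link}. In this final step the only point needing care is that the induction hypothesis is stated for trees, whereas deletions and links produce forests; Lemma~\ref{lem: disjoint union} handles that passage. One should also check that the empty-forest cases yield $\{\emptyset\}$, which is a simplex.
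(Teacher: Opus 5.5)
Your proof is correct and is essentially the paper's own argument. Both use strong induction on $|V(T)|$ with the simplex base case $|V(T)|\le r$, take a shedding vertex $v$ from Lemma~\ref{lem: all shedding vertices} via the descent construction, handle the deletion by Lemmas~\ref{lem: del} and~\ref{lem: disjoint union}, and handle the link by Lemma~\ref{lem: shedding vertex for the particular link} with $C=\{v\}$. Your remark on $r=1$, where the link is $\Ind_1(T\setminus N_T[v])$, is already the $|C|=r$ case inside that lemma's proof.
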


\begin{proof}
We proceed by induction on $n = |V(T)|$.

If $n \le r$, then every subset of the vertex set of $T$ is $r$-independent, so $\Ind_r(T)$ is a simplex and hence vertex decomposable.

Assume the result holds for all trees with fewer than $n$ vertices, and let $T$ be a tree with $|V(T)| = n$.

By Lemma~\ref{lem: all shedding vertices}, there exists a shedding vertex $v \in V(T)$. To show that $\Ind_r(T)$ is vertex decomposable, it suffices to verify that both the deletion and the link with respect to $v$ are vertex decomposable.

\medskip
\noindent
\textbf{Deletion:}
By Lemma~\ref{lem: del}, we have
\[
\del_{\Ind_r(T)}(v) = \Ind_r(T \setminus v).
\]

Since $T \setminus v$ is a forest whose components are trees with fewer vertices than $T$, the induction hypothesis implies that the $r$-independence complex of each component is vertex decomposable. Thus, by Lemma~\ref{lem: disjoint union}, $\Ind_r(T \setminus v)$ is vertex decomposable.

\medskip
\noindent
\textbf{Link:} Since $v$ is a connected induced subgraph of $T$ containing $v$, the induction hypothesis and Lemma~\ref{lem: shedding vertex for the particular link} imply that
\[
\lk_{\Ind_r(T)}(v)
\]
is vertex decomposable.

\medskip
\noindent
As both $\del_{\Ind_r(T)}(v)$ and $\lk_{\Ind_r(T)}(v)$ are vertex decomposable and $v$ is a shedding vertex, $\Ind_r(T)$ is vertex decomposable.
\end{proof}

We conclude the article with a question. 
Lemma~\ref{lem: all shedding vertices} completely characterizes shedding vertices in $\Ind_r(T)$. 
Hence, it is natural to ask for an analogous description for links.

\begin{question}
Let $T$ be a tree, $r \ge 1$, and let $A \subseteq V(T)$ such that each connected component of the induced subgraph $T[A]$ has size at most $r$. Is there a combinatorial characterization of the shedding vertices of 
$\lk_{\Ind_r(T)}(A)$?
\end{question}

\section*{Acknowledgements}

The author thanks Priyavrat Deshpande for valuable suggestions on an earlier draft of this article. The author also acknowledges partial support from an Infosys Foundation grant.

\nocite{*}
\bibliographystyle{abbrv}
\bibliography{references}

\end{document}